\documentclass[12pt,reqno]{amsart}
\usepackage{bbm}
\usepackage{mathrsfs}
\usepackage{amsfonts}
\usepackage{amsfonts}
\usepackage{amsfonts}
\usepackage{amsfonts}
\usepackage{amsfonts}
\usepackage{amsfonts}
\usepackage{amsfonts}
\usepackage{amsfonts}
\usepackage{amsfonts}
\usepackage{amsfonts}
\usepackage{amsfonts}
\usepackage{amsfonts}
\usepackage{amsfonts}
\usepackage{amsfonts}
\usepackage{amsfonts}
\usepackage{amsfonts}
\usepackage{amsfonts}
\usepackage{amsfonts}
\usepackage{a4wide}
\setlength{\topmargin}{-.5in} \setlength{\oddsidemargin}{0in}
\setlength{\textheight}{9in} \setlength{\textwidth}{6.5in}

\newtheorem{lemma}{Lemma}[section]

\newtheorem{theorem}{Theorem}[section]

\let\Section=\section
\def\section{\setcounter{equation}{0}\Section}
\sf
\begin{document}
\title[Threshold results for parabolic systems ]
{Threshold results for semilinear parabolic systems }
\author{Qiuyi Dai \ Haiyang He \ Junhui Xie}
\thanks{Department of Mathematics, Hunan Normal University, Changsha Hunan 410081,
 P.R.China}
\thanks{Corresponding author: Haiyang He(hehy917@yahoo.com.cn)}
\thanks{This work was supported by NNSFC(No.10971061)}

\maketitle

\vskip 0.2cm \arraycolsep1.5pt
\newtheorem{Lemma}{Lemma}[section]
\newtheorem{Theorem}{Theorem}[section]
\newtheorem{Definition}{Definition}[section]
\newtheorem{Proposition}{Proposition}[section]
\newtheorem{Remark}{Remark}[section]
\newtheorem{Corollary}{Corollary}[section]

\begin{abstract} In this paper, we study initial boundary value problem of semi-linear parabolic systems
\begin{equation}\label{eq:0.1} \left\{
  \begin{array}{ll}
u_t-\Delta u=v^p \ &  (x,t)\in   \Omega\times(0, T), \\[2mm]
v_t-\Delta v=
u^q  \ &  (x,t)\in   \Omega\times(0, T),\\[2mm]
u(x,t)=v(x, t)=0   \ &  (x,t)\in   \partial\Omega\times[0, T],\\[2mm]
u(x,0)=u_0(x)\geq0     \ \    &   x\in  \Omega,\\[2mm]
v(x,0)=v_0(x)\geq 0     \ \    &   x\in  \Omega
\end{array}
 \right.
 \end{equation}
and prove that any positive solution of its steady-state problem
\begin{equation}\label{eq:0.2} \left\{
  \begin{array}{ll}
-\Delta u=v^p \ &  x\in   \Omega, \\[2mm]
 -\Delta v=
u^q  \ &  x\in   \Omega,\\[2mm]
u=v=0  \ &  x\in   \partial\Omega
\end{array}
 \right.
 \end{equation}
 is an initial datum threshold for the
existence and nonexistence  of global solution to problem(\ref{eq:0.1}).
For the precisely statement of this result, see Theorem 1.1  in the introduction of this paper.

\end{abstract}
\vspace{3mm}
  \noindent {\bf Key words}:  Initial boundary value problem, Semi-linear parabolic systems,  Threshold result, Steady-state problem.  \\
\noindent {\bf AMS} classification:  35J50,  35J60. \vspace{3mm}

\section {Introduction}

\setcounter{equation}{0}

Let $\Omega$ be a bounded domain in $R^N$. We consider the following
initial-boundary value problem
\begin{equation}\label{eq:1.1} \left\{
  \begin{array}{ll}
u_t-\Delta u=v^p \ &  (x,t)\in   \Omega\times(0, T), \\[2mm]
v_t-\Delta v=
u^q  \ &  (x,t)\in   \Omega\times(0, T),\\[2mm]
u(x,t)=v(x, t)=0   \ &  (x,t)\in   \partial\Omega\times[0, T],\\[2mm]
u(x,0)=u_0(x)\geq 0 \ \    &   x\in  \Omega,\\[2mm]
v(x,0)=v_0(x)\geq 0   \ \    &   x\in  \Omega,
\end{array}
 \right.
 \end{equation}
 where $u_t, v_t$ are, respectively, the partial derivatives of $u(x,t)$ and $v(x,t)$ with respect to variable $t$,
 $\Delta=\sum\limits_{i=1}^{N}\frac{\partial^2}{\partial x_i^2}$ is the Laplace
 operator, and $N\geq 2$, $p,q>1$ satisfy
 \begin{equation}\label{eq2}
 \frac{1}{p+1}+\frac{1}{q+1}>\frac{N-2}{N}.
 \end{equation}

 It is well known that for any $u_0(x), v_0(x)\in L^{\infty}(\Omega)$, problem (\ref{eq:1.1}) has an unique classical
 solution $(u(x,t),v(x,t))$ in a short time, which is called a local solution of problem (\ref{eq:1.1}).
 Let $T_{\max}$ denote the maximum existence time of
 $(u(x,t),v(x,t))$ as a classical solution. If $T_{\max}=+\infty$,
 then we say that $(u(x,t),v(x,t))$ exists globally, or problem
 (\ref{eq:1.1}) has global solution. If $T_{\max}<+\infty$, then we
 have
 $$\lim\limits_{t\rightarrow
 T_{\max}^-}\sup\limits_{x\in\Omega}u(x,t)=\lim\limits_{t\rightarrow
 T_{\max}^-}\sup\limits_{x\in\Omega}v(x,t)=+\infty,$$
 for which we say that $(u(x,t),v(x,t))$ blows up in a finite time
 (see for example \cite{QuSoup} for more details).

 It is also well known that the solution $(u(x,t),v(x,t))$ exists
 globally when the initial value $(u_0(x),v_0(x))$ is small enough
 in some sense, and blows up in a finite time when  the initial value $(u_0(x),v_0(x))$
 is large enough in a suitable sense (see
 \cite{CM}\cite{D}\cite{EH1}\cite{EH2}\cite{QuSoup} for the exact statement).
 However, the classification of the initial datum $(u_0(x),v_0(x))$
 according to the existence or nonexistence of global solutions to problem
 (\ref{eq:1.1}) is still far from complete. Hence, an important
 task in the study of problem (\ref{eq:1.1}) is to find exact
 conditions on the initial datum $(u_0(x),v_0(x))$ which can ensure
 the existence or nonexistence of global solutions to problem
 (\ref{eq:1.1}). On this direction, we present here a so called
 threshold result for problem (\ref{eq:1.1}) by making use of its
 positive equilibriums. To state our result
 simply and precisely, we introduce some notations and definitions
 first. For any planar vector $(a,b)$ and $(c,d)$, we use
 $(a,b)\geq(c,d)$ to mean that $a\geq c$ and $b\geq d$, and
 $(a,b)=(c,d)$ to mean that $a=c$ and $b=d$. If $a,b,c,d$ are
 functions of variable $x$, we use
 $(a(x),b(x))\not\equiv(c(x),d(x))$ to mean that there exists at
 least one point $x_0$ such that
 $(a(x_0),b(x_0))\neq(c(x_0),d(x_0))$. Finally, we say that
 $(U(x),V(x))$ is a positive equilibrium of problem (\ref{eq:1.1})
 if it is a solution of the following steady-state problem related to problem
 (\ref{eq:1.1}).

\begin{equation}\label{eq:1.2}
\left\{\begin{array}{ll}
-\Delta U=V^p  &  x\in   \Omega, \\
-\Delta V=U^q  &  x\in   \Omega,\\
(U,V)>(0,0) & x\in\Omega\\
(U,V)=(0,0)  \ &  x\in\partial\Omega.
\end{array}
\right.
\end{equation}

Keeping the above notations and definitions in mind, our main result
of this paper can be stated as
\begin{theorem}\label{tm:1.1}
Assume that $p,q>1$ satisfy (\ref{eq2}), and that $(U(x), V(x))$ is
an arbitrary smooth solution of problem (\ref{eq:1.2}). Then there
holds

\vskip 0.1in

 (i) If $\big(0, 0\big)\leq \big(u_0(x), v_0(x)\big)\leq
\big(U(x), V(x)\big)$ and $\big(u_0(x), v_0(x)\big)\not\equiv
\big(U(x), V(x)\big)$, then problem (\ref{eq:1.1}) has a global
solution $(u(x,t), v(x, t))$. Moreover,
$\lim\limits_{t\rightarrow\infty} (u(x, t),v(x,t))=(0,0)$.

\vskip 0.1in

(ii) If $\big(u_0(x), v_0(x)\big)\geq \big(U(x), V(x)\big)$ and
$\big(u_0(x), v_0(x)\big)\not\equiv \big(U(x), V(x)\big)$, then the
solution $(u(x,t), v(x, t))$ of problem (\ref{eq:1.1}) blows up in a
finite time.
\end{theorem}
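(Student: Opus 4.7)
The proof hinges on the scaling computation
\[
-\Delta(\mu U) - (\mu V)^{p} = \mu\bigl(1 - \mu^{p-1}\bigr) V^{p},
\qquad
-\Delta(\mu V) - (\mu U)^{q} = \mu\bigl(1 - \mu^{q-1}\bigr) U^{q},
\]
which shows that $(\mu U, \mu V)$ is a strict positive super-solution of (\ref{eq:1.1}) when $0 < \mu < 1$, a stationary solution when $\mu = 1$, and a strict positive sub-solution when $\mu > 1$. Because $v \mapsto v^{p}$ and $u \mapsto u^{q}$ are nondecreasing, the system is quasi-monotone, so the standard comparison principle for semilinear parabolic systems and the strong maximum principle / Hopf lemma for the associated cooperative linearizations are available throughout.

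\textbf{Part (i).} Comparison against the super-solution $(U, V)$ gives the global $L^{\infty}$-bound $(u, v) \leq (U, V)$, hence global existence. Applied to the nonnegative cooperative pair $(U - u, V - v)$ (whose coupling is bounded since $V^{p} - v^{p} = p\xi^{p-1}(V - v)$ for some intermediate $\xi$), the strong maximum principle and Hopf's lemma produce $(u(\cdot, t_{0}), v(\cdot, t_{0})) \leq \mu_{0}(U, V)$ in $\overline{\Omega}$ for some $t_{0} > 0$ and $\mu_{0} \in (0, 1)$. Thereafter, comparing with the super-solution $(\mu_{0}U, \mu_{0}V)$ and using that the flow issued from a super-solution is time-monotone decreasing, the trajectory converges to a classical stationary pair $(U_{\infty}, V_{\infty}) \leq \mu_{0}(U, V)$. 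Nontrivial such limits are excluded by the rigidity argument: setting $\mu^{*} = \max\bigl(\sup_{\Omega} U_{\infty}/U,\ \sup_{\Omega} V_{\infty}/V\bigr) \in (0, \mu_{0}]$ and using $V_{\infty} \leq \mu^{*} V$,
\[
-\Delta(\mu^{*} U - U_{\infty}) = \mu^{*} V^{p} - V_{\infty}^{p} \geq \mu^{*} V^{p} - (\mu^{*} V)^{p} = \mu^{*}\bigl(1 - (\mu^{*})^{p-1}\bigr) V^{p} > 0,
\]
so the strong maximum principle and Hopf's lemma force strict inequality $\mu^{*} U > U_{\infty}$ (and similarly in the second component) throughout $\overline{\Omega}$, contradicting the definition of $\mu^{*}$. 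Hence $(U_{\infty}, V_{\infty}) = (0, 0)$, and squeezing $0 \leq (u, v) \leq (u_{*}, v_{*})$ with $(u_{*}, v_{*}) \to 0$ yields the decay.

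\textbf{Part (ii).} The mirror construction gives $(u, v) \geq (U, V)$ by comparison with $(U, V)$ as sub-solution, and the same strong maximum principle / Hopf argument applied to $(u - U, v - V)$ yields $(u(\cdot, t_{0}), v(\cdot, t_{0})) \geq \lambda_{0}(U, V)$ for some $t_{0} > 0$ and $\lambda_{0} > 1$. Using the strict sub-solution $(\lambda_{0}U, \lambda_{0}V)$ the flow started from this datum is time-monotone increasing and stays above $\lambda_{0}(U, V)$. The analogous rigidity — with $\lambda^{*} = \min\bigl(\inf_{\Omega} U_{\infty}/U,\ \inf_{\Omega} V_{\infty}/V\bigr) \geq \lambda_{0}$ and $-\Delta(U_{\infty} - \lambda^{*} U) \geq \lambda^{*}((\lambda^{*})^{p-1} - 1) V^{p} > 0$ — rules out any classical stationary pair dominating $\lambda_{0}(U, V)$, so this monotone trajectory cannot converge to a steady state as $t \to \infty$ and must escape every $L^{\infty}$-ball.

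\textbf{Main obstacle: finite-time blow-up.} The remaining and most delicate step is to upgrade this unbounded growth into $T_{\max} < \infty$. The plan is a Kaplan-type argument: let $\phi_{1} > 0$ be the first Dirichlet eigenfunction of $-\Delta$ on $\Omega$ (eigenvalue $\lambda_{1}$, normalized so that $\int \phi_{1}\,dx = 1$), and set $F(t) = \int u \phi_{1}\,dx$, $G(t) = \int v \phi_{1}\,dx$. Testing (\ref{eq:1.1}) against $\phi_{1}$ and applying Jensen's inequality gives
\[
F'(t) \geq -\lambda_{1} F(t) + G(t)^{p},
\qquad
G'(t) \geq -\lambda_{1} G(t) + F(t)^{q}.
\]
To show $F, G \to \infty$, I would linearize at $(U, V)$: the cooperative operator $L(\phi, \psi) = (\Delta \phi + p V^{p-1}\psi,\ \Delta \psi + q U^{q-1}\phi)$ admits a positive principal eigenvalue $\sigma_{1}$, and $\sigma_{1} > 0$ follows by pairing $L(U, V) = ((p-1) V^{p}, (q-1) U^{q}) > 0$ with the adjoint positive principal eigenfunction $(\widetilde{\Phi}_{1}, \widetilde{\Psi}_{1})$, which forces $\sigma_{1}\int(U \widetilde{\Phi}_{1} + V \widetilde{\Psi}_{1}) > 0$. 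Using $v^{p} - V^{p} \geq p V^{p-1}(v - V)$ and $u^{q} - U^{q} \geq q U^{q-1}(u - U)$ one checks that $Z(t) := \int(u - U)\widetilde{\Phi}_{1} + \int(v - V)\widetilde{\Psi}_{1}$ obeys $Z'(t) \geq \sigma_{1} Z(t)$, so $Z$ grows exponentially and drags $F, G$ to infinity. Once $F, G \geq 1$ and sufficiently large, $F^{q} + G^{p} \geq 2^{1-\min(p,q)}(F + G)^{\min(p, q)}$ dominates $\lambda_{1}(F + G)$, yielding $(F + G)'(t) \geq c(F + G)^{\min(p, q)}$ with $\min(p, q) > 1$, which integrates to a finite-time singularity in $F + G$ and hence $T_{\max} < \infty$. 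Fitting this Kaplan-plus-linearization estimate together with the sub-solution comparison and the supra-equilibrium rigidity is the essential and most technical piece of the argument.
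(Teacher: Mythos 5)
Your overall strategy is sound and part~(i) matches the paper's in spirit, but you replace the paper's key tool and the whole of part~(ii) by a genuinely different route, so a comparison is in order. The paper's engine is an integral identity for two steady states (Lemma~\ref{lm:3.1}): $\int_\Omega gU(g^{q-1}-U^{q-1})\,dx=\int_\Omega hV(V^{p-1}-h^{p-1})\,dx$, which immediately forbids an ordered pair of distinct equilibria. In part~(i) the paper, like you, pulls the trajectory under $\alpha(U,V)$ with $\alpha<1$ and lets the monotone flow descend to a steady state, but kills a nontrivial limit with that identity rather than with your sweeping argument at the extremal ratio $\mu^*$; both work, and yours is self-contained where theirs needs the lemma. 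The real divergence is part~(ii): the paper argues by contradiction, assuming global existence, and then runs an energy argument ($E(t)$ decreasing, $\varphi'\geq -2E(0)+C\varphi^\gamma$ with $\gamma>1$) to get a priori $L^{q+1}\times L^{p+1}$ bounds on the increasing auxiliary flow, passes to a weak limit, bootstraps it to a classical equilibrium above $(U,V)$, and again contradicts the identity. You instead prove blow-up directly via linearized instability of $(U,V)$ plus Kaplan's eigenfunction method. Your route avoids the $L^1$-solution regularity theory entirely, but it buys this at the cost of the principal eigenvalue theory for the non-self-adjoint cooperative system $L(\phi,\psi)=(\Delta\phi+pV^{p-1}\psi,\ \Delta\psi+qU^{q-1}\phi)$ and its adjoint (Krein--Rutman for fully coupled cooperative systems), machinery the paper never touches; your sign computation $\sigma_1\int(U\widetilde\Phi_1+V\widetilde\Psi_1)=\int((p-1)V^p\widetilde\Phi_1+(q-1)U^q\widetilde\Psi_1)>0$ is correct once that theory is granted.

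Two points still need to be nailed down. First, from $Z(t)\geq Z(t_0)e^{\sigma_1 t}$ you only get $F(t)+G(t)\to\infty$ (after checking $\widetilde\Phi_1,\widetilde\Psi_1\leq C\phi_1$, which holds since the adjoint eigenfunctions are $C^1$ up to $\partial\Omega$ and vanish there); your ODE step needs \emph{both} $F$ and $G$ simultaneously large, or at least $F+G\geq 2$ together with the observation that the larger of the two exceeds $(F+G)/2$, so that $F^q+G^p\geq 2^{-\min(p,q)}(F+G)^{\min(p,q)}$ eventually dominates $\lambda_1(F+G)$. This is fixable in a few lines (e.g.\ run the Kaplan functionals on the monotone auxiliary flow, where $F$ and $G$ are nondecreasing and each is forced to infinity by the other through the differential inequalities), but as written it is a gap. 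Second, your middle paragraph in part~(ii) --- ruling out convergence of the monotone trajectory to a steady state and concluding it ``escapes every $L^\infty$-ball'' --- presumes that a bounded monotone trajectory converges to a \emph{classical} equilibrium; establishing that is precisely what the paper's energy estimates are for, and you should either supply comparable a priori bounds or simply delete that paragraph, since your linearization argument makes it redundant.
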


We remark here that Theorem 1.1 is a natural generalization of
results on scalar equations proved by P.L.Lions in \cite{Lion} and
A.A.Lacey in \cite{Lac},but the method we use here is different.
Roughly speaking, Theorem 1.1 says that any smooth solution of
problem (\ref{eq:1.2}) is an initial datum threshold for the
existence and nonexistence of global solutions to problem
(\ref{eq:1.1}). It is also worth pointing out that the restriction
(\ref{eq2}) on the exponents $p$ and $q$ is optimal in the sense
that problem (\ref{eq:1.2}) has no solutions for star-shaped domains
$\Omega$ when (\ref{eq2}) is violated (see \cite{CFM}).

The plan of this paper is as follows. Section 2 devotes to prove two
lemmas need in the proof of theorem 1.1. The proof of Theorem 1.1 is
given in Section 3. Some further remarks are included in Section 4.

\section  {Preliminaries}
In this section, we prove two lemmas which will  be used later in
the proof of our main result.

\begin{lemma}\label{lm:3.1}
Let $(g(x), h(x))$ and $(U(x), V(x))$ be two distinct smooth
solutions of problem (\ref{eq:1.4}). Then we have
 \[\int_\Omega g(x) U(x)(g^{q-1}-U^{q-1})\ dx=\int_\Omega h(x)V(x)(V^{p-1}-h^{p-1})\ dx.\]
\end{lemma}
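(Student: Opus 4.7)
The plan is to reduce both sides of the claimed identity to a common form by integration by parts, exploiting the fact that each pair $(g,h)$ and $(U,V)$ satisfies the elliptic system (\ref{eq:1.2}) with zero Dirichlet boundary conditions. Since everything is smooth and vanishes on $\partial\Omega$, no boundary terms appear.

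First I would expand:
\[
\int_\Omega gU(g^{q-1}-U^{q-1})\,dx=\int_\Omega g^qU\,dx-\int_\Omega gU^q\,dx,
\]
\[
\int_\Omega hV(V^{p-1}-h^{p-1})\,dx=\int_\Omega hV^p\,dx-\int_\Omega h^pV\,dx,
\]
so that the desired identity becomes
\[
\int_\Omega g^qU\,dx-\int_\Omega gU^q\,dx=\int_\Omega hV^p\,dx-\int_\Omega h^pV\,dx.
\]

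Next I would use the equations of (\ref{eq:1.2}) as substitutes for the powers. Since $-\Delta h=g^q$ and $-\Delta U=V^p$, integration by parts (twice) gives
\[
\int_\Omega g^qU\,dx=\int_\Omega(-\Delta h)U\,dx=\int_\Omega h(-\Delta U)\,dx=\int_\Omega hV^p\,dx.
\]
Analogously, using $-\Delta g=h^p$ and $-\Delta V=U^q$,
\[
\int_\Omega gU^q\,dx=\int_\Omega g(-\Delta V)\,dx=\int_\Omega(-\Delta g)V\,dx=\int_\Omega h^pV\,dx.
\]
Subtracting these two identities yields exactly the equality required, completing the proof.

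There is essentially no obstacle here: the only thing to be careful about is that the Green's identity be applied legitimately, which is guaranteed by the smoothness assumption on the solutions and the common vanishing on $\partial\Omega$. The hypothesis that $(g,h)$ and $(U,V)$ are \emph{distinct} is not needed for the identity itself; it presumably matters only when the lemma is invoked later (to deduce, for instance, that certain integrals have a definite sign). I would therefore not use distinctness in the proof, and I would note this as a small remark at the end.
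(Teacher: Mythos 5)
Your proof is correct and is essentially the paper's own argument: both establish $\int_\Omega g^q U\,dx=\int_\Omega hV^p\,dx$ and $\int_\Omega gU^q\,dx=\int_\Omega h^pV\,dx$ by applying Green's identity to the cross terms and then subtract. Your remark that distinctness is not needed for the identity is also accurate (and your write-up avoids the paper's typo $U^{q-2}$ in the final display, which should read $U^{q-1}$).
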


\begin{proof}
 This result can be found  in \cite{DF}.  However, for the reader's convenience, We give a proof here.
Since $(g(x), h(x))$ and $(U(x), V(x))$ are solutions of problem
(\ref{eq:1.2}), we have
\begin{equation}\label{eq:3.2} \left\{
  \begin{array}{ll}
-\Delta g(x)=h^p \ &  x\in   \Omega, \\[2mm]
 -\Delta h(x)=
g^q  \ &  x\in   \Omega,\\[2mm]
g=h=0   \ &  x\in   \partial\Omega
\end{array}
 \right.
 \end{equation}
 and
 \begin{equation}\label{eq:3.3} \left\{
  \begin{array}{ll}
-\Delta U(x)=V^p \ &  x\in   \Omega, \\[2mm]
 -\Delta V(x)=
U^q  \ &  x\in   \Omega,\\[2mm]
U=V=0   \ &  x\in   \partial\Omega.
\end{array}
 \right.
 \end{equation}
 From these, we can derive
 \[\int_\Omega h^p V(x)\ dx=-\int_\Omega \Delta g(x) V(x) \ dx=-\int_\Omega g(x)\Delta V\ dx=\int_\Omega g(x) U^q,\]
 \[\int_\Omega h V^p\ dx=-\int_\Omega \Delta U(x) h(x) \ dx=-\int_\Omega U(x) \Delta h(x)\ dx=\int_\Omega U g^q.\]
Consequently

 \[\int_\Omega g(x)U(x)(g^{q-1}-U^{q-2})\ dx=\int_\Omega
h(x)V(x)(V^{p-1}-h^{p-1})\ dx.\]

\end{proof}

\begin{lemma}\label{lm:2.1}
Assume that $x>0, y>0$, and $0<a<1$. Then \[x^a+y^a\leq
2^{1-a}(x+y)^a.\]
\end{lemma}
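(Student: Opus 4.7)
The plan is to deduce the inequality directly from the concavity of the map $t\mapsto t^a$ on $(0,\infty)$, which holds because $0<a<1$. First I would normalize by setting $s=x/(x+y)\in(0,1)$, so that $1-s=y/(x+y)$, and factor out $(x+y)^a$ to reduce the claim to showing
\[
s^a+(1-s)^a\leq 2^{1-a}\qquad\text{for all }s\in[0,1].
\]

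Next I would prove this one-variable inequality by applying Jensen's inequality to the concave function $f(t)=t^a$ with equal weights $\tfrac12,\tfrac12$ at the points $s$ and $1-s$. This gives
\[
\frac{s^a+(1-s)^a}{2}=\frac{f(s)+f(1-s)}{2}\leq f\!\left(\frac{s+(1-s)}{2}\right)=\left(\tfrac12\right)^a=2^{-a},
\]
and multiplying through by $2$ yields exactly $s^a+(1-s)^a\leq 2^{1-a}$. Multiplying this back by $(x+y)^a$ recovers the stated bound $x^a+y^a\leq 2^{1-a}(x+y)^a$.

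Alternatively, to avoid invoking Jensen, I could study $g(s)=s^a+(1-s)^a$ on $[0,1]$ directly. Its derivative $g'(s)=a s^{a-1}-a(1-s)^{a-1}$ vanishes only at $s=\tfrac12$, and since $g(0)=g(1)=1$ while $g(\tfrac12)=2\cdot 2^{-a}=2^{1-a}>1$, this critical point must be the maximum, giving the same bound. There is essentially no obstacle here; the only subtlety is making sure $s=0$ or $s=1$ (i.e.\ the degenerate cases $x=0$ or $y=0$) are covered, but the strict positivity hypothesis $x,y>0$ makes even this unnecessary and in any case the endpoint value $g=1\leq 2^{1-a}$ holds trivially. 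I would then remark that equality is attained precisely when $x=y$, which matches the equality condition $s=\tfrac12$ in Jensen's inequality.
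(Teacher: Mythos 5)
Your proposal is correct, and your second (derivative-based) variant is word-for-word the paper's own argument: the paper studies $g(t)=t^a+(1-t)^a$, finds the sign of $g'$, concludes $g(t)\leq g(1/2)=2^{1-a}$, and substitutes $t=x/(x+y)$. Your primary route via Jensen's inequality applied to the concave function $t\mapsto t^a$ with weights $\tfrac12,\tfrac12$ is a clean, calculus-free repackaging of the same underlying concavity fact, and as a bonus it makes the equality case $x=y$ transparent; either version is a complete proof.
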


\begin{proof}
Let $g(t)=t^a+(1-t)^a$, $0<t<1$. An easy computations yields
\[g'(t)=a t^{a-1}-a(1-t)^{a-1}. \]
Hence, we have
\[ \left\{
  \begin{array}{ll}
g'(t)>0\ \ \  &  \ \  0<t<\frac{1}{2}, \\[2mm]
g'(t)=0 \ \ \ &  \ \  t=\frac{1}{2},\\[2mm]
g'(t)<0   \ \ \ &  \ \ \frac{1}{2}<t<1
\end{array}
 \right.\]
From this, we conclude that

 \[g(t)=t^a+(1-t)^a\leq 2^{1-a}. \]
Substituting $t=\frac{x}{x+y}$ into the above inequality, we finally
obtain that
\[x^a+y^a\leq 2^{1-a}(x+y)^a.\]
\end{proof}

\section  {Proof of Theorem  1.1}

\textbf{Proof of Theorem  1.1:}\  (i) Since $(0,0)\leq(u_0(x),
v_0(x))\leq(U(x), \ V(x))$, and $U(x), V(x)\in L^\infty(\Omega)$, we
know that problem (\ref{eq:1.1}) has a global solution $(u(x, t),
v(x, t))$. Noticing that $(u_0(x), v_0(x))\not\equiv(U(x), V(x))$,
it follows from the maximum principle and the strong comparison
principle that
\[(0,0)\leq(u(x, t), v(x, t))<(U(x),
V(x)\] for any $(x,t)\in \Omega\times (0, +\infty)$.

Therefore, we may assume, by replacing $(u_0(x), v_0(x))$ with
$(u(x, T), v(x, T))$ for some $T>0$ if necessary, that $(u_0(x),
v_0(x))\leq(\alpha U(x), \alpha V(x))$ for some constant
$0<\alpha<1$. Let $(g_\alpha (x), h_\alpha(x))=(\alpha U(x), \alpha
V(x))$. It is easy to verify that $(g_\alpha (x), h_\alpha(x))$
satisfies
 \begin{equation}\label{eq:3.4} \left\{
  \begin{array}{ll}
-\Delta g_\alpha >(h_\alpha)^p \ &  x\in   \Omega, \\[2mm]
 -\Delta h_\alpha>
(g_\alpha )^q  \ &  x\in   \Omega,\\[2mm]
 g_\alpha=h_\alpha=0   \ &  x\in   \partial\Omega,\\[2mm]
g_\alpha >0, h_\alpha >0\ &  x\in   \Omega.
\end{array}
 \right.
 \end{equation}
This implies that $(g_\alpha (x), h_\alpha(x))$ is a strict
super-solution of the following problem
\begin{equation}\label{eq:3.5} \left\{
\begin{array}{ll}
G_t-\Delta G=H^p \ &  (x,t)\in   \Omega\times(0, T), \\[2mm]
H_t-\Delta H=
G^q  \ &  (x,t)\in   \Omega\times(0, T),\\[2mm]
G=H=0   \ &  (x,t)\in   \partial\Omega\times[0, T],\\[2mm]
G(x,0)=g_\alpha \geq0      \ \    &   x\in  \Omega,\\[2mm]
H(x,0)=h_\alpha\geq 0 \ \    &   x\in  \Omega.
\end{array}
\right.
\end{equation}
Let $(G(x,t), H(x, t))$ be the solution of (\ref{eq:3.5}). By strong
comparison principle we know that $(G(x, t), H(x, t))$ is strictly
decreasing with respect to $t$, and $(0,0)\leq(G(x, t),
H(x,t))\leq(U(x), V(x))$. Therefore, $(G(x, t), H(x, t))$ exists
globally. Moreover, there are some functions $g(x)$ and $h(x)$ such
that
 \[\lim\limits_{t\rightarrow\infty} G(x,t)=g(x), \quad  \lim\limits_{t\rightarrow\infty} H(x,t)=h(x).\]
uniformly on $\Omega$, and $(g(x), h(x))$ is a smooth solution of
the following problem.
\begin{equation}\label{eq:1.4}
\left\{\begin{array}{ll}
-\Delta g=h^p  &  x\in   \Omega, \\
-\Delta h=g^q  &  x\in   \Omega,\\
(g,h)=(0,0)  \ &  x\in\partial\Omega.
\end{array}
\right.
\end{equation}
From this, we conclude that $(g(x), h(x))\equiv(0,0)$. Otherwise, by
strong maximum principle, we have $(g(x), h(x))>(0,0)$. On the other
hand, we have $(g(x),h(x))<(U(x),V(x))$ since $(G(x, t), H(x, t))$
is strictly decreasing with respect to $t$. Thus
\[\int_\Omega g(x) U(x)(g^{q-1}-U^{q-1})\ dx<0, \quad \int_\Omega h(x)V(x)(V^{p-1}-h^{p-1})\ dx>0.\]
This is a contradiction with Lemma 2.1. Therefore
 \[\lim\limits_{t\rightarrow\infty} (G(x,t),H(x,t))=(0,0).\]
Noticing that $(0, 0)\leq(u_0, v_0) \leq( g_\alpha(x), h_\alpha(x))
$, comparison principle ensures
 \[(0,0)\leq (u(x, t),v(x,t))\leq (G(x, t),H(x, t)).\]
By applying squeeze principle, we obtain
 \[\lim\limits_{t\rightarrow\infty}(u(x, t),v(x,t))=(0,0).\]

\vskip 0.1in

(ii)\ we prove the conclusion (ii) of Theorem 1.1 by contradiction.
To this end, we assume that $(u_0(x), v_0(x))\geq(U(x), V(x))$,
$(u_0(x), v_0(x)) \not \equiv( U(x), V(x))$ and problem
(\ref{eq:1.1}) has a global solution $(u(x, t), v(x,t))$. By strong
comparison principle, we have
  \[(u(x, t), v(x,t)) > (U(x), V(x)), \]
for  any  $(x, t)\in \bar{\Omega}\times (0,+\infty)$. Therefore, we
may assume, by replacing $(u_0(x), v_0(x))$ with $(u(x, T), v(x, T;
u_0, v_0))$ for some $T>0$  if necessary, that $(u_0(x), v_0(x))\geq
(\beta U(x),\beta V(x))$ for some constant $\beta>1$. Let $(g_\beta,
h_\beta)=(\beta U(x), \beta V(x))$. It is easy to verify that
$(g_\beta, h_\beta)$ satisfies
 \begin{equation}\label{eq:3.7} \left\{
  \begin{array}{ll}
-\Delta g_\beta<(h_\beta)^p \ &  x\in   \Omega, \\[2mm]
 -\Delta h_\beta<
(g_\beta)^q  \ &  x\in  \Omega,\\[2mm]
 g_\beta=h_\beta=0   \ &  x\in   \partial\Omega.
\end{array}
 \right.
 \end{equation}
Hence, $(g_\beta, h_\beta)$ is a strict sub-solution of the
following problem
\begin{equation}\label{eq:3.8} \left\{
\begin{array}{ll}
G_t-\Delta G=H^p \ &  (x,t)\in   \Omega\times(0, T), \\[2mm]
 H_t-\Delta H=
G^q  \ &  (x,t)\in   \Omega\times(0, T),\\[2mm]
G=H=0   \ &  (x,t)\in   \partial\Omega\times[0, T],\\[2mm]
G(x,0)=g_\beta \geq0    \ \    &   x\in  \Omega,\\[2mm]
H(x,0)=h_\beta\geq 0     \ \    &   x\in  \Omega.
\end{array}
 \right.
 \end{equation}
 Let $(G(x, t), H(x, t))$ be the solution of problem (\ref{eq:3.8}).
 Then it follows from the comparison principle that
\[(G(x, t), H(x, t))\leq (u(x, t), v(x, t))\] for any $(x, t)$ due to
 $(g_\beta (x), h_\beta(x))\leq \big(u_0,  v_0\big)$.
 Consequently, $(G(x, t), H(x, t))$ exists globally and is strictly increasing
 with respect to $t$.

 Let

 \[\varphi(t)=\int_\Omega G(x, t)H(x,t)\ dx.\]
 \[E(t)=\int_\Omega \nabla G\nabla H\ dx-\frac{1}{p+1}\int_\Omega H^{p+1}\ dx-\frac{1}{q+1}\int_\Omega G^{q+1}\ dx,\]

 By making use of (\ref{eq:3.8}), we can verify that $\varphi(t)$ and $E(t)$ satisfy
 \[\dfrac{d \varphi}{dt}=-2E(t)+\frac{p-1}{p+1}\int_\Omega H^{p+1}\ dx+\frac{q-1}{q+1}\int_\Omega G^{q+1}\ dx,\]
 \[\begin{array}{ll}
 \dfrac{d E(t)}{dt}&=\int_\Omega \nabla G_t \nabla H\ dx+\int_\Omega \nabla H_t \nabla G\ dx
 -\int_\Omega H^p H_t\ dx-\int_\Omega G^q G_t\ dx \\[5mm]
 &=-2\int_\Omega G_t H_t\ dx\leq 0.
 \end{array}\]
Let $\gamma=\frac{(p+1)(q+1)}{p+q+2}$. It follows from the
assumption $p>1$ and $q>1$ that
\[\gamma>1, \ \  \frac{q+1}{\gamma}>1, \ \  \frac{p+1}{\gamma}>1, \ \ \frac{\gamma}{q+1}+\frac{\gamma}{p+1}=1.\]
By H\"{o}lder's inequality, Young's inequality, and Lemma 2.2, we
have
 \[\begin{array}{ll}
 \varphi(t)&\leq \frac{q+1}{\gamma}\int_\Omega G^\frac{q+1}{\gamma}\ dx
 +\frac{p+1}{\gamma}\int_\Omega H^\frac{p+1}{\gamma}\ dx\\[5mm]
 &\leq \frac{\max\{p,q\}+1}{\gamma}|\Omega|^{1-\frac{1}{\gamma}}\big((\int_\Omega G^{q+1}\ dx)^\frac{1}{\gamma}
 +(\int_\Omega H^{p+1}\ dx)^\frac{1}{\gamma}\big)\\[5mm]
 &\leq \frac{\max\{p,q\}+1}{\gamma}|\Omega|^{1-\frac{1}{\gamma}} 2^{1-\frac{1}{\gamma}}[\int_\Omega G^{q+1}\ dx+\int_\Omega H^{p+1}\ dx]^\frac{1}{\gamma}.
 \end{array}\]
 Hence, there exists a positive constant $C$ such that
 \[\dfrac{d \varphi}{dt}\geq -2E(t)+C\varphi^{\gamma}(t).\]
 Since $E(t)$ is decreasing in $t$, we have $E(t)\leq E(0)$ for any $t>0$. Consequently,
  \[\dfrac{d \varphi}{dt}\geq -2E(0)+C\varphi^{\gamma}(t).\]
From this, we may conclude that
\[\sup\limits_{t\geq0}\int_\Omega
GH\ dx<+\infty .\] Otherwise, we have  $\int_\Omega GH\
dx\rightarrow +\infty, \ as \ t\rightarrow\infty$ due to
$\int_\Omega GH\ dx=\varphi(t)$ is strictly increasing in $t$.
Hence, there exists a constant $T>0$ large enough such that
\[\frac{d}{dt}\int_\Omega GH\ dx\geq \frac{C}{2}(\int_\Omega GH\ dx)^{\gamma},\]
for any $t>T$. This implies that $(G(x,t),H(x,t))$ must blow up in a
finite time which contradicts the fact that $(G(x,t),H(x,t))$ is a
global solution of problem (\ref{eq:3.8}).

Let \[T(t)=\int_\Omega G^{q+1}\ dx+\int_\Omega H^{p+1}\ dx.\] Then,
$T(t)$ is strictly increasing in $t$ because $(G(x,t),H(x,t))$ does.
Thus, for any $t>0$, we have
 \[\begin{array}{ll}
 C\geq \int_t^{t+1}\frac{d}{d s}\int_\Omega GH \ dx \ ds&=-2\int_t^{t+1} E(s)
 ds
 +\frac{p-1}{p+1} \int_t^{t+1} \int_\Omega G^{p+1}\ dx \ ds \\[5mm]
 &\ \ \ \ +\frac{q-1}{q+1}\int_t^{t+1} \int_\Omega H^{q+1}\ dx \ ds,\\[5mm]
 &\geq  -2 E(0)+\min\{\frac{p-1}{p+1}, \frac{q-1}{q+1}\}T(t).
 \end{array}\]
From this, we can easily see that

\[\sup\limits_{t\geq 0}T(t)<+\infty.\]

Consequently, there are functions $g(x)\in L^{p+1}(\Omega)$ and $
h(x)\in L^{q+1}(\Omega)$ such that
  \[G(x, t)\rightarrow g(x)\ \    \mbox{weakly in}\ \ \  L^{p+1}(\Omega), \]
\[H(x, t)\rightarrow h(x)\ \ \  \mbox{weakly in} \ \ \ L^{q+1}(\Omega).\]
Multiplying the first and the second equation in (\ref{eq:3.8}) by
$\varphi$ and $\psi$ respectively, and integrating the result
equations on $[t,t+1]$, we obtain
\[\int_\Omega [G(x, t+1)-G(x, t)]\varphi  \ dx \ ds
 +\int_t^{t+1}\int_\Omega G(-\Delta \varphi )\ dx \ ds= \int_t^{t+1}\int_\Omega H^p\varphi \ dx \ ds,\]
 \[\int_\Omega[ H(x, t+1)-H(x, t)]\psi  \ dx \ ds
 +\int_t^{t+1}\int_\Omega H(-\Delta \psi )\ dx \ ds= \int_t^{t+1}\int_\Omega G^p\psi \ dx \ ds,\]
 Passing to the limit as $t\rightarrow \infty$, we find that
\[\int_\Omega g(-\Delta \varphi )\ dx = \int_\Omega h^p\varphi \ dx ,\]
 \[\int_\Omega h(-\Delta \psi )\ dx = \int_\Omega g^p\psi \ dx .\]
This implies that $(g(x), h(x))$ is a $L^1$ solution of problem
(\ref{eq:1.2}) (For the definition of the $L^1$ solution, we refer
to \cite{QuSoup}).

Noticing that $p,q>1$ satisfy (\ref{eq2}) and
\[\int_\Omega g^{q+1}\ dx<+\infty, \quad  \int_\Omega
h^{p+1}\ dx<+\infty,\] it follows from the regularity theory
(bootstrap method) of $L^1$ solution that $g, h\in L^\infty(\Omega)$
(see \cite{QuSoup}). With $L^{\infty}$ estimate in hand, we can
establish the $H_0^1$ estimate of $g(x)$ and $h(x)$ by making use of
the following facts
\begin{equation}\label{eq:3.9} \left\{
\begin{array}{ll}
-\Delta G\leq H^p \quad & (x,t)\in\Omega\times(0,+\infty), \\[2mm]
-\Delta H\leq G^q  \quad &  (x,t)\in\Omega\times(0, +\infty),\\[2mm]
G=H=0   \quad &  (x,t)\in\partial\Omega\times[0, +\infty).
\end{array}
\right.
\end{equation}
Now, we can conclude that $(g, h)$ is a classical solution of
problem (\ref{eq:1.2}) by the standard regularity theory of elliptic
differential equations (see \cite{GT}).

Since $( g_\beta(x), h_\beta(x))>(U(x),V(x))$,
 it follows from the strong comparison principle that \[ G(x, t)>U(x), \quad  H(x, t)>V(x)\] for any
 $(x, t)$. Consequently \[g(x)>U(x), \quad h(x)>V(x).\]
From this, we have
 \[\int_\Omega g(x) U(x)(g^{q-1}-U^{q-1})\ dx>0\ \ \ \mbox{and}\ \ \
\int_\Omega h(x)V(x)(V^{p-1}-h^{p-1})\ dx<0.\] This is a
contradiction with the conclusion of Lemma 2.1 and we complete the
proof of Theorem 1.1 (ii).
 $\Box$

\section  {Further Remarks}

The method used in the proof of theorem 1.1 can be applied to study
the following inhomogeneous problem
\begin{equation}\label{eq:4.1}
\left\{\begin{array}{ll}
u_t-\Delta u=v^p+\lambda f(x) \ &  (x,t)\in   \Omega\times(0, T), \\[2mm]
v_t-\Delta v=u^q+\lambda g(x)  \ &  (x,t)\in   \Omega\times(0, T),\\[2mm]
(u,v)=(0,0)   \ &  (x,t)\in   \partial\Omega\times[0, T],\\[2mm]
(u(x,0),v(x,0))=(u_0(x),v_0(x))\geq(0,0) \ \    &   x\in  \Omega,
\end{array}
 \right.
 \end{equation}
where $p,q>1$ satisfy (\ref{eq2}), and
$(0,0)\leq(f(x),g(x))\not\equiv(0,0)$.

The main difference between problem (\ref{eq:1.1}) and
(\ref{eq:4.1}) lies in the structure of their equilibrium sets. From
lemma 2.1, we can easily see that any two distinct equilibriums of
problem (\ref{eq:1.1}) must intersect. However, problem
(\ref{eq:4.1}) has an unique minimal equilibrium for $\lambda>0$
small enough which separates from other equilibriums. To state our
results precisely, we consider the following steady-state problem of
problem (\ref{eq:4.1})
\begin{equation}\label{eq:4.2}
\left\{\begin{array}{ll}
-\Delta u=v^p+\lambda f(x) \ &  x\in   \Omega, \\[2mm]
-\Delta v=u^q+\lambda g(x)  \ & x\in   \Omega,\\[2mm]
(u,v)>(0,0)\ & x\in\Omega,\\
(u,v)=(0,0)   \ &  x\in\partial\Omega.
\end{array}
\right.
\end{equation}
By sub-solution and sup-solution method, it is not difficult to
prove the following

\begin{lemma}\label{lm:4.1}
There exists a positive number $\lambda^*$ such that the following
two statements are true.

\vskip 0.08in

(i)\ If $\lambda>\lambda^*$, then problem (\ref{eq:4.2}) has no
solution.

\vskip 0.08in

(ii)\ If $0<\lambda<\lambda^*$, then problem (\ref{eq:4.2}) has an
unique minimal solution $(u_{\min}(x),v_{\min}(x))$ in the sense
that $((u_{\min}(x),v_{\min}(x))\leq(u(x),v(x))$ for any solution
$(u(x),v(x))$ of problem (\ref{eq:4.2}). Moreover, if
$(u(x),v(x))\not\equiv(u_{\min}(x),v_{\min}(x))$, then
$((u_{\min}(x),v_{\min}(x))<(u(x),v(x))$.
\end{lemma}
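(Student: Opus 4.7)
\textbf{Proof plan for Lemma \ref{lm:4.1}.} The plan is to run the classical sub- and super-solution method for the coupled elliptic system, organized around the admissibility set $\Lambda = \{\lambda > 0 : \mbox{(\ref{eq:4.2}) has a positive classical solution}\}$. First I would check $\Lambda \neq \emptyset$ and that $\Lambda$ is bounded above. Let $\phi \in C^2(\bar\Omega)$ solve $-\Delta \phi = 1$ with zero boundary data. For $\varepsilon > 0$ small and $\lambda > 0$ small, the pair $(\varepsilon\phi, \varepsilon\phi)$ is a super-solution since $p, q > 1$ make $\varepsilon^p\|\phi\|_\infty^p$ and $\varepsilon^q\|\phi\|_\infty^q$ negligible against $\varepsilon$; together with the trivial sub-solution $(0,0)$ (using $f, g \geq 0$), the coupled monotone iteration $-\Delta u_{n+1} = v_n^p + \lambda f$, $-\Delta v_{n+1} = u_n^q + \lambda g$ with zero Dirichlet data produces a positive solution, so $\Lambda \neq \emptyset$. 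For the upper bound, testing both equations in (\ref{eq:4.2}) against the first Dirichlet eigenfunction $\phi_1$ (eigenvalue $\mu_1$) and applying Jensen's inequality to $t \mapsto t^p$ and $t \mapsto t^q$ with respect to the probability measure $\phi_1\, dx / \|\phi_1\|_{L^1}$ produces inequalities $\mu_1 a \geq b^p + \lambda F$ and $\mu_1 b \geq a^q + \lambda G$ for the $\phi_1$-weighted averages $a, b, F, G$; eliminating $a, b$ using $pq > 1$ gives a bound $\lambda \leq C(\Omega, p, q, f, g)$.

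Next I would show $(0, \lambda^*) \subset \Lambda$ with $\lambda^* := \sup \Lambda$. If $\lambda_0 \in \Lambda$ has solution $(u_0, v_0)$, then for any $0 < \lambda < \lambda_0$ the pair $(u_0, v_0)$ is a super-solution at level $\lambda$, so the iteration above started at $(0, 0)$ produces a solution at $\lambda$; this proves assertion (i). To produce the minimal solution in (ii), I would observe that the iteration started at $(0, 0)$ is dominated by \emph{every} solution of (\ref{eq:4.2}) at the same $\lambda$ (since each such solution is itself a super-solution), so its monotone limit $(u_{\min}, v_{\min})$ lies below every other solution, and uniqueness of a minimal element is then automatic.

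For the strict inequality in (ii), I would set $w = u - u_{\min}$ and $z = v - v_{\min}$ for a non-minimal solution $(u, v)$. The mean value theorem gives $-\Delta w = p\xi^{p-1} z \geq 0$ and $-\Delta z = q\eta^{q-1} w \geq 0$ with $w, z \geq 0$, $w = z = 0$ on $\partial\Omega$, and $(w, z) \not\equiv (0, 0)$. A brief case analysis (neither component can vanish identically while the other does not, else the coupled equations force a contradiction) together with the strong maximum principle forces $w, z > 0$ throughout $\Omega$.

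The main obstacle I anticipate is the decoupling step in the eigenfunction estimate: the two Jensen-type inequalities must be chained, using $pq > 1$, to extract an explicit $\lambda$-bound that does not depend on the unknown solution. A secondary technical point is convergence of the monotone iteration in a topology strong enough to yield a classical (not merely $L^1$) solution; this will follow from uniform $L^\infty$ bounds on the iterates via elliptic $L^p$ regularity and Schauder theory, exactly as in the treatment of $L^1$ solutions invoked in the proof of Theorem 1.1(ii).
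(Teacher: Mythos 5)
Your proposal is correct and follows exactly the route the paper intends: the paper offers no written proof of Lemma \ref{lm:4.1}, stating only that it follows ``by sub-solution and sup-solution method,'' and your sketch is a sound elaboration of precisely that method (nonemptiness and boundedness of the admissible set via the small supersolution $(\varepsilon\phi,\varepsilon\phi)$ and the first-eigenfunction/Jensen estimate with $pq>1$, downward closedness via a solution at a larger $\lambda$ serving as supersolution, the minimal solution as the monotone limit from $(0,0)$, and strict ordering via the strong maximum principle applied to the cooperative linearized system). The only nitpick is a labeling slip: the downward-closedness of $\Lambda$ is what yields the existence part of (ii), while (i) follows directly from $\lambda^*=\sup\Lambda$.
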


Let $u(x)=U(x)+u_{\min}(x)$ and $v(x)=V(x)+v_{\min}(x)$. Then, it is
easy to see that $(U,V)$ satisfies

\begin{equation}\label{eq:4.3}
\left\{\begin{array}{ll}
-\Delta U=(V+v_{\min})^p-v^p_{\min}  &  x\in   \Omega, \\
-\Delta V=(U+u_{\min})^q-u^q_{\min} &  x\in   \Omega,\\
(U,V)=(0,0)  \ &  x\in\partial\Omega.
\end{array}
\right.
\end{equation}
By variational method, we can prove that problem (\ref{eq:4.3}) has
at least one positive solution provided that (\ref{eq2}) holds (see
\cite{HanL}). Hence, we have

\begin{theorem}\label{tm:4.2}
Assume that $p,q>1$ satisfy (\ref{eq2}). Let $\lambda^*$ be the
number obtained in lemma 4.1. Then, for any
$\lambda\in(0,\lambda^*)$, problem (\ref{eq:4.2}) has at least two
solutions, and among them there exists a minimal one.
\end{theorem}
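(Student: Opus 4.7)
The strategy is to reduce Theorem \ref{tm:4.2} to the existence of a single positive solution of the homogeneous-looking system (\ref{eq:4.3}). By Lemma \ref{lm:4.1}(ii), for every $\lambda\in(0,\lambda^*)$ the minimal solution $(u_{\min},v_{\min})$ of (\ref{eq:4.2}) already exists and is strictly positive in $\Omega$; this supplies the first of the two solutions. To produce a second, I would use the shift $u=U+u_{\min}$, $v=V+v_{\min}$ indicated in the excerpt, under which $(u,v)$ solves (\ref{eq:4.2}) precisely when $(U,V)$ solves (\ref{eq:4.3}) with $(U,V)\geq(0,0)$. Any positive solution of (\ref{eq:4.3}) will then give back a solution of (\ref{eq:4.2}) lying strictly above $(u_{\min},v_{\min})$, hence distinct from the minimal one.

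The key step is therefore the existence of one positive solution of (\ref{eq:4.3}), which I would obtain by the Hamiltonian-system variational method, as carried out in \cite{HanL} and originally in the framework of Cl\'ement--de Figueiredo--Mitidieri. The nonlinearities $(V+v_{\min})^p-v_{\min}^p$ and $(U+u_{\min})^q-u_{\min}^q$ are nonnegative for $U,V\geq 0$, vanish at $(0,0)$, and grow like $V^p$ and $U^q$ respectively at infinity, so the associated formal energy
\[ J(U,V)=\int_\Omega \nabla U\cdot\nabla V\,dx-\int_\Omega F(V)\,dx-\int_\Omega G(U)\,dx, \]
with $F,G$ the primitives (vanishing at $0$) of the two shifted nonlinearities, shares the structural features of the energy for the unperturbed system. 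Since $J$ is strongly indefinite on $H_0^1\times H_0^1$, I would either recast it in a pair of fractional Sobolev spaces $E^s\times E^{2-s}$ tuned so that both embeddings are subcritical, or equivalently pass to the dual variational formulation. In either case a mountain-pass or linking argument produces a nontrivial critical point $(U,V)$.

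The main obstacle is compactness of the Palais--Smale sequences, and this is precisely where (\ref{eq2}) is essential: it places $(p,q)$ strictly below the critical hyperbola and guarantees that the Sobolev embeddings used to control the nonlinear terms are compact, so the $(PS)$ condition can be verified at every level. With compactness in hand, the geometry of $J$ is standard in this setting. Once the nontrivial critical point is produced, positivity follows in two steps: testing with the negative parts $(U_-,V_-)$ and using that the shifted nonlinearities vanish at $0$ forces $(U,V)\geq(0,0)$, after which the strong maximum principle applied to each equation upgrades this to $(U,V)>(0,0)$ in $\Omega$. Undoing the shift yields the desired second solution of (\ref{eq:4.2}), strictly larger than $(u_{\min},v_{\min})$, and the theorem follows.
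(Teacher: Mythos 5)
Your proposal follows essentially the same route as the paper: the minimal solution from Lemma \ref{lm:4.1}(ii) supplies the first solution, and the shift $u=U+u_{\min}$, $v=V+v_{\min}$ reduces the existence of a second, strictly larger solution to producing one positive solution of (\ref{eq:4.3}) by the strongly indefinite variational method of \cite{HanL}, with condition (\ref{eq2}) providing the subcriticality needed for compactness. The paper itself gives no more detail than this reduction plus a citation to \cite{HanL}, so your sketch is, if anything, more explicit than the original.
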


By the same method as that used in the proof of lemma 2.1, we can
prove the following

\begin{lemma}\label{lm:4.3}
Let $(U_1, V_1)$ and $(U_2, V_2)$ be any two smooth solutions of
problem (\ref{eq:4.3}),
$G(u)=\frac{(u+u_{\min})^{q}-u^{q}_{\min}}{u}$ and
$H(v)=\frac{(v+v_{\min})^{p}-v^{p}_{\min}}{v}$. Then we have
 \[\int_\Omega U_1U_2(G(U_2)-G(U_1))\ dx
 =\int_\Omega V_1V_2(H(V_1)-H(V_2))\ dx.\]
\end{lemma}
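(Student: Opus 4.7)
The plan is to mimic the proof of Lemma 2.1 (the integral identity for the homogeneous system) almost verbatim, substituting the nonlinearities appropriately. The key observation is that the functions $G$ and $H$ are designed precisely so that the system (\ref{eq:4.3}) can be rewritten in the product form
\[-\Delta U_i = V_i\,H(V_i), \qquad -\Delta V_i = U_i\,G(U_i), \qquad i=1,2,\]
since $(V+v_{\min})^p - v_{\min}^p = v\,H(v)$ and $(U+u_{\min})^q - u_{\min}^q = u\,G(u)$ by definition. Once the equations are in this shape, the asymmetry between $U$ and $V$ is handled exactly as in Section~2.

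First I would apply Green's identity to cross terms. Multiplying the first equation for $U_1$ by $V_2$ and integrating, then transferring the Laplacian to the other factor, gives
\[\int_\Omega V_1 V_2\, H(V_1)\, dx = -\int_\Omega \Delta U_1 \cdot V_2\, dx = -\int_\Omega U_1\cdot \Delta V_2\, dx = \int_\Omega U_1 U_2\, G(U_2)\, dx.\]
Swapping the roles of the indices $1$ and $2$ in the same computation yields
\[\int_\Omega V_1 V_2\, H(V_2)\, dx = \int_\Omega U_1 U_2\, G(U_1)\, dx.\]
Since $(U_i,V_i)$ are smooth and vanish on $\partial\Omega$, the boundary terms in both integrations by parts vanish, so these identities are rigorous.

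Subtracting the two displayed identities gives
\[\int_\Omega V_1 V_2\bigl(H(V_1)-H(V_2)\bigr)\, dx = \int_\Omega U_1 U_2\bigl(G(U_2)-G(U_1)\bigr)\, dx,\]
which is exactly the claim. The only mild obstacle is bookkeeping: one has to be careful about the signs and about pairing $U_1$ with $V_2$ (and vice versa) rather than matching indices, so that the non-local terms $u_{\min}^q$ and $v_{\min}^p$ cancel out neatly inside the product forms $U G(U)$ and $V H(V)$. Everything else is a direct transcription of the argument used for Lemma~2.1.
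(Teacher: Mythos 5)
Your proof is correct and is exactly the argument the paper intends: the paper gives no separate proof of Lemma 4.3, stating only that it follows ``by the same method as that used in the proof of Lemma 2.1,'' and your rewriting of the system as $-\Delta U_i = V_i H(V_i)$, $-\Delta V_i = U_i G(U_i)$ followed by the two cross integrations by parts and a subtraction is precisely that method.
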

Noting that $G(u)$ and $H(v)$ are strictly increasing in $u$ and $v$
respectively due to $p,q>1$, we infer from lemma 4.2 that the
following result on the structure of solution set of problem
(\ref{eq:4.2}) holds

\begin{theorem}\label{tm:4.4}
With the same assumption as that of theorem 4.1, problem
(\ref{eq:4.2}) has at least two solutions, and among them there
exists a minimal one. Moreover, any two distinct solutions of
problem (\ref{eq:4.2}) which are also different from the minimal one
must intersect somewhere.
\end{theorem}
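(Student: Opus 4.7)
The first assertion --- that (\ref{eq:4.2}) admits at least two solutions among which there is a minimal one --- is precisely Theorem~\ref{tm:4.2}, so the substantive new content is the intersection statement. I would prove it by contradiction, reducing the problem to comparing two positive solutions of the translated system (\ref{eq:4.3}) via Lemma~\ref{lm:4.3}.

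\textbf{Set-up.} Let $(u_1,v_1)$ and $(u_2,v_2)$ be two distinct solutions of (\ref{eq:4.2}), neither equal to $(u_{\min},v_{\min})$, and define
\[ U_i=u_i-u_{\min},\qquad V_i=v_i-v_{\min},\qquad i=1,2. \]
A one-line subtraction shows that $(U_i,V_i)$ solves (\ref{eq:4.3}), and Lemma~\ref{lm:4.1}(ii) gives $(U_i,V_i)>(0,0)$ in $\Omega$, so both pairs are admissible inputs in Lemma~\ref{lm:4.3}. Assume for contradiction that $(u_1,v_1)$ and $(u_2,v_2)$ do not intersect, i.e.\ one pair dominates the other throughout $\overline{\Omega}$; by symmetry I may take $(U_1,V_1)\le(U_2,V_2)$ pointwise on $\overline{\Omega}$.

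\textbf{Execution.} The crux is the strict monotonicity of
\[ G(u)=\frac{(u+u_{\min})^q-u_{\min}^q}{u},\qquad H(v)=\frac{(v+v_{\min})^p-v_{\min}^p}{v}. \]
Because $p,q>1$, the maps $t\mapsto t^p$ and $t\mapsto t^q$ are strictly convex on $[0,\infty)$, and for fixed $a\ge 0$ the difference quotient $u\mapsto(f(a+u)-f(a))/u$ is then strictly increasing in $u>0$ (a short mean-value argument). Applied pointwise with $a=u_{\min}(x)$ and $a=v_{\min}(x)$, this yields $G(U_1)\le G(U_2)$ and $H(V_1)\le H(V_2)$ everywhere in $\Omega$. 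Hence the integrand of the left-hand side of the identity in Lemma~\ref{lm:4.3} is pointwise $\ge 0$, while that of the right-hand side is pointwise $\le 0$; equality of the two integrals forces both integrands to vanish a.e. Since $U_1 U_2>0$ in $\Omega$ and $G$ is \emph{strictly} increasing, this is only possible if $U_1\equiv U_2$; symmetrically $V_1\equiv V_2$, contradicting the distinctness of the two solutions.

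\textbf{Anticipated obstacle.} The single delicate point is the strict monotonicity of $G$ and $H$: without strictness, the identity in Lemma~\ref{lm:4.3} only tells us that the two signed integrals agree, not that the integrands themselves vanish. Strict convexity of $t^p$ and $t^q$, encoded exactly by the hypothesis $p,q>1$, is what makes this step succeed. Beyond that, the argument is a clean bookkeeping reduction: existence/minimality is Theorem~\ref{tm:4.2} and Lemma~\ref{lm:4.1}, the translation $u\mapsto u-u_{\min}$ transfers everything to (\ref{eq:4.3}), and Lemma~\ref{lm:4.3} does the work.
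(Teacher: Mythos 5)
Your proposal is correct and follows exactly the route the paper intends: existence of two solutions and a minimal one is imported from Theorem \ref{tm:4.2}, and the intersection property comes from the identity of Lemma \ref{lm:4.3} combined with the strict monotonicity of $G$ and $H$ (which the paper only asserts in passing, and which you justify via strict convexity of $t^p$, $t^q$). Your write-up actually supplies more detail than the paper, which states the conclusion without a displayed proof.
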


With theorem 4.2 established, by a similar argument to that used in
the proof of theorem 1.1, we can reach the following
\begin{theorem}\label{tm:4.3}
Assume that $p,q>1$ satisfy (\ref{eq2}). Let $\lambda^*$ be the
number obtained in lemma 4.1. Then, we have

\vskip 0.08in

(i)\ If $\lambda>\lambda^*$, then, for any initial value
$(u_0(x),v_0(x))\geq(0,0)$, the solution $(u(x,t),v(x,t))$ of
problem (\ref{eq:4.1}) must blow up in a finite time.

\vskip 0.08in

(ii)\ If $0<\lambda<\lambda^*$, and $(U(x), V(x))$ is an arbitrary
smooth solution of problem(\ref{eq:4.2}) which is different from the
minimal one, then problem (\ref{eq:4.1}) has a global solution
$(u(x,t), v(x, t))$ with $\lim\limits_{t\rightarrow\infty} (u(x,
t),v(x,t))=(u_{\min}(x),v_{\min}(x))$ provided that $(0, 0)\leq
(u_0(x), v_0(x))\leq(U(x), V(x))$ and $(u_0(x), v_0(x))\not\equiv
(U(x), V(x))$; whereas, the solution $(u(x,t),v(x,t))$ of problem
(\ref{eq:4.1}) must blow up in a finite time if $(u_0(x),
v_0(x))\geq(U(x), V(x))$ and $(u_0(x), v_0(x))\not\equiv(U(x),
V(x))$.
\end{theorem}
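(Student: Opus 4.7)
The plan is to mirror the proof of Theorem 1.1 after reducing to the ``shifted'' system around the minimal stationary solution, using Lemma 4.3 in the role played by Lemma 2.1. Writing $\tilde U = U-u_{\min}$ and $\tilde V = V-v_{\min}$, Lemma 4.1(ii) ensures $(\tilde U,\tilde V)>(0,0)$, and the pair is a positive solution of (\ref{eq:4.3}). The auxiliary family
\[(g_\alpha,h_\alpha) := (u_{\min}+\alpha\tilde U,\ v_{\min}+\alpha\tilde V)\]
is a strict super-solution of (\ref{eq:4.2}) for every $\alpha\in(0,1)$ and a strict sub-solution for every $\beta>1$. Both claims follow from the strict convexity of $t\mapsto(v_{\min}+t)^{p}$ and $t\mapsto(u_{\min}+t)^{q}$, exactly analogous to the role that $(\alpha U,\alpha V)$ played in the proof of Theorem 1.1.

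For the global part of (ii), the strong comparison principle with the stationary $(U,V)$ yields $(u(\cdot,t),v(\cdot,t))<(U,V)$ for all $t>0$, and a Hopf-type boundary argument then shows that after some time $T>0$ one has $(u(\cdot,T),v(\cdot,T))\le(g_\alpha,h_\alpha)$ for a suitable $\alpha\in(0,1)$. I compare with the parabolic flow $(G,H)$ issuing from $(g_\alpha,h_\alpha)$: this flow decreases monotonically, remains above $(u_{\min},v_{\min})$, and therefore converges to a limit $(g,h)$ solving (\ref{eq:4.2}). If $(g,h)\ne(u_{\min},v_{\min})$, then $(g-u_{\min},h-v_{\min})$ is a positive solution of (\ref{eq:4.3}) strictly below $(\tilde U,\tilde V)$; applying Lemma 4.3 and using the strict monotonicity of $G(\cdot)$ and $H(\cdot)$ (since $p,q>1$), the two sides of the identity carry opposite signs, forcing $(g,h)=(u_{\min},v_{\min})$. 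Sandwiching $(u,v)$ between this upper flow and the lower flow from $(0,0)$ (which is monotone increasing and converges to $(u_{\min},v_{\min})$ by the symmetric argument) completes the global case. The blow-up part of (ii) is parallel: after some time $(u,v)\ge(g_\beta,h_\beta)$ for a suitable $\beta>1$, and the comparison flow $(G,H)$ from $(g_\beta,h_\beta)$ is strictly increasing. If $(G,H)$ were global, the $\varphi$-versus-$E$ argument of Theorem 1.1(ii), with $E$ modified by the extra term $-\lambda\int(fG+gH)\,dx$ so that the identity $\frac{dE}{dt}=-2\int G_tH_t\,dx\le0$ is preserved, would produce a stationary limit of (\ref{eq:4.2}) strictly above $(U,V)$, and Lemma 4.3 again supplies the sign contradiction.

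For part (i), I apply the same $\varphi$-versus-$E$ argument to the solution $(\underline u,\underline v)$ of (\ref{eq:4.1}) starting from $(0,0)$; since $(f,g)\not\equiv(0,0)$, the datum $(0,0)$ is a strict sub-solution of (\ref{eq:4.2}), so $(\underline u,\underline v)$ is monotone increasing in $t$. The corrected energy identity again gives $\varphi'\ge -2E(0)+C\varphi^{\gamma}-C'$ after absorbing the extra $\lambda\int(fv+gu)\,dx$ via Young's inequality into $\int v^{p+1}+\int u^{q+1}$; hence $(\underline u,\underline v)$ either blows up in finite time or converges weakly to a stationary solution of (\ref{eq:4.2}), which is forbidden by Lemma 4.1(i) when $\lambda>\lambda^{*}$. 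By the comparison principle, finite-time blow-up then propagates to every non-negative initial datum. The main technical obstacle throughout is the step in (ii) that upgrades the strict pointwise comparison $(u,v)<(U,V)$, or $(u,v)>(U,V)$, to the scaled comparison with $(g_\alpha,h_\alpha)$ or $(g_\beta,h_\beta)$: it requires a Hopf-type boundary estimate showing that the ratio $(U-u)/\tilde U$ and its counterpart remain uniformly bounded away from $1$, after which the remainder is a faithful, if notationally heavier, transcription of the proof of Theorem 1.1.
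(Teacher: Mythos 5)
Your plan is correct and follows exactly the route the paper intends: the paper gives no proof of Theorem 4.3 beyond the remark that it follows ``by a similar argument to that used in the proof of theorem 1.1'', and your shifted comparison functions $(u_{\min}+\alpha\tilde U,\ v_{\min}+\alpha\tilde V)$ built from convexity, together with Lemma 4.3 playing the role of Lemma 2.1 and the $\varphi$-versus-$E$ blow-up argument, are precisely that argument spelled out. The only slip is the pairing in the corrected energy: for the system $G_t-\Delta G=H^p+\lambda f$, $H_t-\Delta H=G^q+\lambda g$ the extra term must be $-\lambda\int_\Omega (fH+gG)\,dx$ rather than $-\lambda\int_\Omega (fG+gH)\,dx$ --- the pairing you in fact use correctly in the part (i) paragraph.
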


Finally, we point out that the method of this paper can also be
applied to study the following initial-boundary value problem with
Robin boundary conditions.
\begin{equation}\label{eq:4.5}
\left\{\begin{array}{ll}
u_t-\Delta u=v^p \ &  (x,t)\in   \Omega\times(0, T), \\[2mm]
v_t-\Delta v=u^q \ &  (x,t)\in   \Omega\times(0, T),\\[2mm]
\frac{\partial}{\partial n}(u,v)+\beta(u,v)=(0,0)   \ &  (x,t)\in   \partial\Omega\times[0, T],\\[2mm]
(u(x,0),v(x,0))=(u_0(x),v_0(x))\geq(0,0) \ \    &   x\in  \Omega,
\end{array}
 \right.
 \end{equation}
where $n$ is the outer unit vector normal to the boundary
$\partial\Omega$ of $\Omega$, and $\beta$ is a positive constant.

By similar arguments to that used in the proof of theorem 1.1, we
can also prove the following result.
\begin{theorem}\label{tm:4.6}
Assume that $p,q>1$ satisfy (\ref{eq2}), and that $(U(x), V(x))$ is
an arbitrary smooth positive equilibrium of problem (\ref{eq:4.5}).
Then there holds

\vskip 0.1in

 (i) If $(0, 0)\leq(u_0(x), v_0(x))\leq
(U(x), V(x))$ and $(u_0(x), v_0(x))\not\equiv(U(x), V(x))$, then
problem (\ref{eq:4.5}) has a global solution $(u(x,t), v(x, t))$.
Moreover, $\lim\limits_{t\rightarrow\infty} (u(x, t),v(x,t))=(0,0)$.

\vskip 0.1in

(ii) If $(u_0(x), v_0(x))\geq(U(x), V(x))$ and $(u_0(x),
v_0(x))\not\equiv(U(x), V(x))$, then the solution $(u(x,t), v(x,
t))$ of problem (\ref{eq:4.5}) must blows up in a finite time.
\end{theorem}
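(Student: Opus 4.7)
The plan is to mimic the proof of Theorem 1.1 while carefully tracking boundary terms produced by the Robin condition. Before starting the dynamics, I would first record the Robin analogue of Lemma~2.1: if $(g,h)$ and $(U,V)$ are two smooth solutions of the stationary Robin problem, then multiplying $-\Delta g=h^p$ by $V$ and $-\Delta V=U^q$ by $g$, integrating by parts, and using $\partial_n g+\beta g=\partial_n V+\beta V=0$ to cancel the boundary integrals, we get $\int_\Omega h^p V\,dx=\int_\Omega g U^q\,dx$; the symmetric computation gives $\int_\Omega h V^p\,dx=\int_\Omega U g^q\,dx$. Subtracting yields
\[
\int_\Omega g U\,(g^{q-1}-U^{q-1})\,dx=\int_\Omega h V\,(V^{p-1}-h^{p-1})\,dx,
\]
so the sign-incompatibility argument that drives both halves of Theorem 1.1 still works.

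For part (i), by the strong maximum principle (with the Hopf/Robin boundary point lemma) I would argue, as in Section 3, that after replacing $(u_0,v_0)$ by $(u(\cdot,T),v(\cdot,T))$ one may assume $(u_0,v_0)\le(\alpha U,\alpha V)$ for some $0<\alpha<1$. Since $p,q>1$, the pair $(\alpha U,\alpha V)$ is a strict super-solution of the Robin parabolic problem (the Robin boundary condition is linear, so $(\alpha U,\alpha V)$ still satisfies it). Letting $(G,H)$ solve (\ref{eq:4.5}) with data $(\alpha U,\alpha V)$, the comparison principle for Robin systems (which requires no modification) gives strict $t$-monotonicity and the bounds $(0,0)\le(G,H)\le(U,V)$. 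Uniform convergence to a stationary solution $(g,h)$ of the Robin problem follows, and then the Robin analogue of Lemma~2.1 forces $(g,h)=(0,0)$, since $0<g<U$ and $0<h<V$ would make the two sides of the identity have opposite signs. A final squeeze then gives $(u,v)\to(0,0)$.

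For part (ii), I would run a contradiction argument along the lines of Theorem 1.1(ii), with $(\beta U,\beta V)$ as a strict sub-solution and $(G,H)$ the corresponding Robin solution, which is $t$-increasing and minorises $(u,v)$. The key modification is in the Lyapunov functional: put
\[
\varphi(t)=\int_\Omega GH\,dx,\qquad E(t)=\int_\Omega\nabla G\cdot\nabla H\,dx+\beta\int_{\partial\Omega}GH\,d\sigma-\tfrac{1}{p+1}\int_\Omega H^{p+1}\,dx-\tfrac{1}{q+1}\int_\Omega G^{q+1}\,dx.
\]
The boundary term $\beta\int_{\partial\Omega}GH$ is exactly what is needed to absorb the extra boundary contribution $-\beta\int_{\partial\Omega}\partial_n G\cdot H=+\beta\int_{\partial\Omega}\beta GH$ that appears when integrating by parts. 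A direct computation, using the two evolution equations tested against $H$, $G$, $H_t$, $G_t$ respectively, then yields exactly $\dot\varphi=-2E+\tfrac{p-1}{p+1}\int H^{p+1}+\tfrac{q-1}{q+1}\int G^{q+1}$ and $\dot E=-2\int G_tH_t\le 0$ (using that $G_t,H_t\ge0$ from monotonicity). From this point the argument of Section 3 carries over verbatim: Hölder, Young, and Lemma~2.2 give $\dot\varphi\ge -2E(0)+C\varphi^\gamma$ with $\gamma=\tfrac{(p+1)(q+1)}{p+q+2}>1$, so $\varphi$ must stay bounded, which forces $T(t)=\int G^{q+1}+\int H^{p+1}$ to be bounded. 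Weak limits produce an $L^1$ solution $(g,h)$ of the Robin stationary problem; bootstrapping gives $g,h\in L^\infty$, and the standard elliptic regularity (now with Robin boundary condition) upgrades $(g,h)$ to a classical solution satisfying $g>U$, $h>V$. The Robin analogue of Lemma~2.1 then again contradicts the strict inequalities, finishing the proof.

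The main obstacle I expect is the bookkeeping of boundary terms: choosing the right definition of $E(t)$ so that $\dot E\le 0$ remains true, and verifying that the $L^1\Rightarrow L^\infty$ bootstrap and the $H^1$ estimate in the spirit of (\ref{eq:3.9}) still close up under $\partial_n G+\beta G=0$ (which they do, since the Robin condition is dissipative). Everything else is a faithful transcription of the Dirichlet argument.
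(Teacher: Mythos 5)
Your proposal is correct and is exactly the argument the paper intends: the paper offers no written proof of Theorem 4.6, only the remark that it follows "by similar arguments" to Theorem 1.1, and you have supplied precisely those arguments, correctly identifying the two points that actually need checking (cancellation of boundary terms in the Robin analogue of Lemma 2.1, and the added term $\beta\int_{\partial\Omega}GH\,d\sigma$ in $E(t)$ so that $\dot E=-2\int_\Omega G_tH_t\,dx\le 0$ survives). The only blemishes are cosmetic: the parenthetical identity for the absorbed boundary contribution has a stray factor of $\beta$, and you reuse the symbol $\beta$ both for the Robin coefficient and for the dilation constant $\beta>1$, which should be renamed to avoid collision.
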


\end{document}